\newcommand{\ov}[1]{\overline{#1}}
\newcommand{\ve}{\varepsilon}
\numberwithin{equation}{section}
\renewcommand{\leq}{\leqslant}
\renewcommand{\le}{\leqslant}
\renewcommand{\ge}{\geqslant}
\begin{document}
\newtheorem{claim}{Claim}
\newtheorem{theorem}{Theorem}[section]
\newtheorem{conjecture}[theorem]{Conjecture}
\newtheorem{lemma}[theorem]{Lemma}
\newtheorem{corollary}[theorem]{Corollary}
\newtheorem{proposition}[theorem]{Proposition}
\newtheorem{question}[theorem]{question}
\newtheorem{defn}[theorem]{Definition}
\newtheorem{remark}{Remark}[section]

\author[A. Chau]{Albert Chau}
\author[B. Weinkove]{Ben Weinkove}

\address{Department of Mathematics \\ The University of British Columbia \\ 1984 Mathematics Road \\ Vancouver, B.C.,  Canada V6T 1Z2} 
\address{Department of Mathematics \\ Northwestern University \\ 2033 Sheridan Road \\ Evanston, IL 60208, USA}

\newenvironment{example}[1][Example]{\addtocounter{remark}{1} \begin{trivlist}
\item[\hskip
\labelsep {\bfseries #1  \thesection.\theremark}]}{\end{trivlist}}
\title[The second boundary value problem]{Monge-Amp\`ere functionals and \\ the second boundary value problem}

\thanks{Research supported in part by NSERC grant 327637-06 and NSF grant  DMS-1332196.  
Part of this work was carried out while the  first-named author was on sabbatical at the Department of Mathematics, Northwestern University and he thanks the department  for their kind hospitality.}

\begin{abstract}
We consider a  Monge-Amp\`ere functional and its corresponding second boundary value problem, a nonlinear fourth order PDE with two Dirichlet boundary conditions.  This problem was solved by Trudinger-Wang and Le under the assumption that the right hand side of the equation is nonpositive.  We remove this assumption, to settle the case of the second boundary value problem with arbitrary right hand side, in dimensions $n \ge 2$.
   In particular, this shows that one can prescribe the affine mean curvature of the graph of a convex function with Dirichlet boundary conditions on the function and the determinant of its Hessian.

We relate our results, and the case of $n=1$, to a notion of properness  for a certain functional on the set of convex functions. \end{abstract}

\maketitle

\section{Introduction}

Let $\Omega \subset \mathbb{R}^n$ be a uniformly convex  domain and $f$ a given function on $\Omega$.  For $u$ a strictly convex function on $\Omega$, 
we consider the Monge-Amp\`ere functional
\begin{equation}\label{functional}
u \mapsto \int_{\Omega}  G(d)dx -\int_{\Omega}  u fdx
\end{equation}
where $d= \det D^2u$ and $G$ is the concave function
\begin{equation} \label{G}
G(d) = \frac{d^{\theta}}{\theta}, \quad \textrm{for some } \theta \in [0,1/n),
\end{equation}
where we take the case $\theta=0$ to mean $G(d)=\log d$.

Write 
\begin{equation} \label{w}
w(d) := G'(d) = \frac{1}{d^{1-\theta}}>0.
\end{equation}
The Euler-Lagrange equation of 
$(\ref{functional})$, with respect to compactly supported perturbations, is the fourth order equation
\begin{equation} \label{Lu}
L[u]:=  U^{ij} (w(d))_{ij} = f \quad \textrm{on } \Omega,
\end{equation}
where we write $U^{ij}$ for the cofactor matrix of $(u_{ij})= D^2u$.

The functional (\ref{functional}) and the equation (\ref{Lu}) have appeared many times in the literature.  When $\theta = 1/(n+2)$ and $f=0$ the functional \eqref{functional} coincides with the affine area of the graph of $u$ and the quantity $L[u]$ is the affine mean curvature of the graph of $u$ (see \cite{TW00}, for example).  Trudinger-Wang \cite{TW05, TW08} solved the \emph{first boundary value problem} 
$$L[u]=f \quad \textrm{on } \Omega, \quad u=\varphi \quad \textrm{on } \partial \Omega, \quad Du \subseteq D\varphi \quad \textrm{on } \partial \Omega,$$
for a given uniformly convex $\varphi$ on $\Omega$.  The case of $f=0$ is  the affine Plateau problem for graphs.  

When $G(d)=\log d$ then $L[u]=f$ is known as Abreu's equation \cite{Ab}, arising in the work of Donaldson and others in the study of constant scalar curvature K\"ahler metrics on toric varieties (see \cite{D0, D1, D09, ZZ, FS, CHLS} for example).  
The functional corresponding to this problem is of the form
\begin{equation} \label{mabuchi}
u \mapsto \int_{\Omega} G(d)dx - \int_{\partial \Omega} u d\sigma + \int_{\Omega} u dA,
\end{equation}
for suitable measures $d\sigma$ and $dA$ on $\partial \Omega$ and $\Omega$ respectively.  Up to a sign, this functional is known as the Mabuchi energy.

Existence of solutions to the first boundary value problem for Abreu's equation was shown by Zhou \cite{Zhou}.

Motivated by Donaldson's work in complex geometry, Le-Savin \cite{LS, LS2} investigated the problem of maximizing the functional (\ref{mabuchi})
for quite general $G$, $dA$ and $d\sigma$.  They studied the corresponding Euler-Lagrange equation (\ref{Lu}) with two boundary conditions for $w$.  Their solution of the problem required a stability condition, in the sense of Donaldson, on the linear part of this functional.

In \cite{D2}, Donaldson  investigated the functional (\ref{functional}), with $f=0$, for more general concave functions $G$.  Motivated by a geometric construction of Joyce \cite{J, CP},  Donaldson established a correspondence between local solutions of (\ref{Lu}) and solutions of a certain second order linear equation.

In this paper, we study the  \emph{second boundary value problem} associated to the equation (\ref{Lu}).  Namely, let $G$ and $w$ be given by (\ref{G}) and (\ref{w}) for some $\theta \in [0,1/n)$.  We look for strictly convex functions $u$ satisfying
\begin{equation} \label{mainequation}
\begin{split}
L[u]:= {} & U^{ij} (w(d))_{ij} = f \quad \textrm{on } \Omega \\
u = {} & \varphi, \quad w(d) = \psi \quad \textrm{on } \partial \Omega,
\end{split}
\end{equation}
for given functions $\varphi$ and $\psi>0$ on $\ov{\Omega}$.  When $\theta=1/(n+2)$, this is the problem of prescribing the affine mean curvature of the graph of $u$ with Dirichlet boundary conditions on $u$ and the determinant of its Hessian $d$.

The problem (\ref{mainequation}) was introduced by Trudinger-Wang and  is an essential ingredient in their solution of the affine Plateau problem \cite{TW05}.  They showed that (\ref{mainequation}) admits solutions when $f \le 0$ (or if $f=f(x,u)$ and $f(x,t) \le 0$ for $t$ sufficiently negative).  Later, Trudinger-Wang used their results on boundary regularity for the Monge-Amp\`ere equation \cite{TW08} to obtain sharper results for $f \le 0$ with $f \in L^{\infty}(\Omega)$ or $f\in C^{\alpha}(\ov{\Omega})$.  Similar techniques give existence for $f \le \ve$ for small $\ve>0$  \cite{TW05, TW08}.  More recently, Le \cite{Le} dealt with the case of $f \le 0$ and $f \in L^p$ for $p>n$.  The case of general $f$, taking possibly large positive values, has until now remained open.

 Trudinger-Wang 
 \cite{TW05, TW08} noted that, when $n=1$, the second boundary value problem (\ref{mainequation}) does not admit solutions if $f$ is too large and positive.  Our main result states that (\ref{mainequation}) can be solved for general $f$ in every dimension except $n=1$.
 
More precisely, we prove the following.

\begin{theorem} \label{maintheorem} Assume $n \ge 2$.
Let $\Omega$ be a uniformly convex domain in $\mathbb{R}^n$ with $\partial \Omega \in C^{3, 1}$. Suppose $f \in L^{\infty}(\Omega)$, $\varphi \in C^{3, 1}(\overline{\Omega})$ and $0<\psi \in C^{1, 1}(\overline{\Omega})$.  Then there exists a unique uniformly convex solution $u \in W^{4, p}(\Omega)$ for all $1 < p < \infty$ to the second boundary value problem (\ref{mainequation}).

 If in addition, $\partial \Omega \in C^{4, \alpha}$ for some $\alpha \in (0,1)$, $f \in C^{\alpha}(\overline{\Omega})$, $\varphi \in C^{4, \alpha}(\overline{\Omega})$ and $0<\psi \in C^{2, \alpha}(\overline{\Omega})$ then $u \in C^{4, \alpha}(\ov{\Omega})$.
\end{theorem}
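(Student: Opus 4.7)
My plan is a continuity method combined with a priori estimates that exploit the variational structure of $L$ as the Euler-Lagrange operator of the concave functional $J(u) = \int_\Omega G(d)\, dx - \int_\Omega u f \, dx$. Starting from the known existence for $f \leq 0$ due to Trudinger-Wang and Le, I deform linearly in the right-hand side: for $s \in [0,1]$ consider the family
\[
L[u_s] = (1-s) f_0 + s f \text{ on } \Omega, \qquad u_s = \varphi, \ w(\det D^2 u_s) = \psi \text{ on } \partial \Omega,
\]
with $f_0$ a negative function for which $u_0$ exists. Let $S \subset [0,1]$ be the set of $s$ for which a strictly convex $C^{4,\alpha}$ solution exists. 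The goal is to show $S$ is nonempty, open and closed.

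For openness, the linearization of $L$ at a strictly convex $u$ is a fourth-order elliptic operator whose leading part is $w'(d)\, U^{ij} U^{kl} \partial_{ij}\partial_{kl}$, with $(U^{ij})$ positive definite by strict convexity. Since $L$ is the variational derivative of $J$, this linearization is the Hessian of $J$; concavity of $G$ and of $u \mapsto (\det D^2 u)^{1/n}$ on the convex cone imply that $u \mapsto \int_\Omega G(\det D^2 u)\, dx$ is concave for $\theta \in [0, 1/n)$, and hence $J$ is strictly concave. Combined with the Dirichlet-type boundary conditions on $\delta u$ and $\delta(w(d))$, this gives invertibility of the linearized problem between the relevant H\"older spaces, and openness follows from the implicit function theorem.

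Closedness is where the real work lies. I need uniform $C^{4,\alpha}$ estimates along the continuity path. I expect the chain of estimates to run as follows. The concavity of $J$ together with solvability at $s=0$ gives an a priori upper bound on $J(u_s)$, yielding an integral bound on $\int_\Omega G(\det D^2 u_s)\, dx$. Convexity and $u_s = \varphi$ on $\partial\Omega$ yield two-sided $L^\infty$ bounds on $u_s$, and Caffarelli's regularity theory then gives interior strict convexity and $C^{1,\alpha}$ regularity. The key new estimate is a uniform pointwise upper bound on $d_s = \det D^2 u_s$, equivalently a pointwise lower bound on $w_s = w(d_s)$. Given such a bound, boundary regularity for Monge-Amp\`ere (Savin, Trudinger-Wang) produces interior and boundary Hessian estimates, and then $U^{ij}(w_s)_{ij} = (1-s)f_0 + s f$ becomes a linear uniformly elliptic equation in $w_s$, to which Calder\'on-Zygmund and Schauder theory apply to yield the desired $W^{4,p}$ and $C^{4,\alpha}$ bounds.

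The main obstacle is precisely the uniform upper bound on $d_s$ when $f$ is allowed to be large and positive. Previous work of Trudinger-Wang and Le handled the case $f \leq 0$ by direct maximum principle arguments applied to $U^{ij}(w)_{ij} = f$; this breaks down here. My approach is to combine the integral control on $\int_\Omega G(d_s)\, dx$ coming from the variational structure with an ABP-type estimate for the linearized Monge-Amp\`ere operator $U^{ij}\partial_{ij}$ and the boundary bound $w_s = \psi$, to upgrade integral to pointwise control. The hypothesis $n \geq 2$ should enter in this upgrade, since in one dimension the equation reduces to a fully rigid ODE system for which the corresponding existence statement is false. Uniqueness, finally, is immediate from strict concavity of $J$ subject to the Dirichlet boundary data.
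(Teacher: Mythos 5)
The central gap is in the sentence ``Convexity and $u_s = \varphi$ on $\partial\Omega$ yield two-sided $L^\infty$ bounds on $u_s$.'' This is false: convexity plus the Dirichlet data gives only the \emph{upper} bound $u_s \le \sup_{\partial\Omega}\varphi$. The lower bound on $\inf_\Omega u$ is not automatic (think of $M(|x|^2-1)$ on the ball: convex, vanishing on the boundary, but $\inf$ is $-M$), and obtaining it is precisely the key new estimate of the paper (Lemma~\ref{lemmau}). Your fallback — an a priori upper bound on $J(u_s)$ from concavity — is also unconvincing as stated, since $J$ contains the term $-\int u_s f$, which you cannot control without the very bound on $u_s$ you are trying to prove; moreover, critical points of $J$ under the constraint $u|_{\partial\Omega}=\varphi$ are not critical points with respect to the full admissible variations (perturbations need not preserve $w(d)|_{\partial\Omega}=\psi$), so the variational structure you invoke is not $J$ but a functional with an additional boundary term.

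Relatedly, you have not located the mechanism that makes $n\ge 2$ work. The paper's $\sup|u|$ bound does use concavity, but along the segment $u_t = t\tilde u + (1-t)u$ joining $u$ to the Trudinger--Wang solution $\tilde u$ of $L[\tilde u]=0$ with the same boundary data; adding the two concavity inequalities $A(1)-A(0)\le A'(0)$ and $A(0)-A(1)\le -A'(1)$ makes the $\int G$ terms cancel and yields an inequality involving only $\int_\Omega fu$ and boundary integrals of the form $\int_{\partial\Omega}\psi\, U^{ij}(u_j-\tilde u_j)\nu_i$. A local computation at the boundary (using $u=\tilde u=\varphi$ on $\partial\Omega$) shows $U^{nn}=K(u_\nu)^{n-1}+O((u_\nu)^{n-2})$, where $K$ is the Gauss curvature of $\partial\Omega$. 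The boundary integral therefore behaves like $\int_{\partial\Omega}K\psi\,(u_\nu)^n$, which is of degree $n$ in $u_\nu$, while the bad term $\int_\Omega fu$ is only of degree $1$. Exactly when $n\ge 2$ the boundary term dominates and forces $\int_{\partial\Omega}(u_\nu)^n$, hence $\sup|u|$, to be bounded. Section~\ref{sectionproper} of the paper makes this structural: the relevant functional is $\mathcal{F}=\int G(d)-\mathcal{L}(u)$ with $\mathcal{L}(u)=\int uf + \tfrac1n\int_{\partial\Omega}K\psi\,u_\nu^n$, and properness of $\mathcal{L}$ holds automatically for $n\ge 2$ and fails in general for $n=1$. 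Your ``integral-to-pointwise upgrade via ABP'' does appear in the paper, but for $w$ rather than $u$, and it only becomes available \emph{after} $\sup|u|$ is bounded. Finally, a smaller point: the paper uses Leray--Schauder degree rather than the continuity method, which sidesteps the openness step; if you insist on the continuity method, the invertibility of the linearized fourth-order problem with the mixed boundary conditions ($\delta u=0$, $\delta w=0$) requires its own argument and concavity gives only injectivity, not surjectivity.
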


When $f \le 0$ the result of Theorem \ref{maintheorem} was proved by Trudinger-Wang \cite{TW08}, and our proof will make use of their work.

We remark that Theorem \ref{maintheorem} also holds for a more general function $G$.  We may assume that $G: (0,\infty) \rightarrow \mathbb{R}$ is a  smooth strictly concave function on $(0,\infty)$ whose derivative $w=G'$ is positive and satisfies:
\begin{enumerate}
\item[(A1)] $\displaystyle{w' + (1-\frac{1}{n}) \frac{w}{d} \le 0}$. 
\item[(A2)] $d w \ge c>0$ for some $c>0$ and all $d\ge 1$.
\item[(A3)] $\displaystyle{d^{1-1/n} w \rightarrow \infty}$ as $d \rightarrow 0$.
\end{enumerate}
We may also weaken the regularity assumptions on $f$.  Suppose we  
 replace the condition $f\in L^{\infty}(\Omega)$ by the two assumptions: 
 \begin{enumerate}
 \item[(a1)]  $\displaystyle{f \in L^{p}(\Omega)}$ for some $p>n$;
 \item[(a2)] $\| f^+ \|_{L^{\infty}(\Omega)} < \infty$.
  \end{enumerate}
 Then we obtain a solution $u \in W^{4,p}(\Omega)$.  
Furthermore, if $G$ is of the form $G(d) = \frac{d^{\theta}}{\theta}$ with $\theta \in (0,1/n)$ then we can replace condition (a2) by 
\begin{enumerate}
\item[(a2)*] $\int_{\Omega} (f^+)^q dx<\infty$ for some $q>1/\theta$,
\end{enumerate}
and still obtain a solution $u \in W^{4,p}(\Omega)$.  These results make use of an estimate of Le \cite{Le} who dealt with the case $f \in L^{p}(\Omega)$ ($p>n$) and $f \le 0$.  

\begin{remark} \emph{The  conditions (a2) or (a2)* may not be sharp.   It would be interesting to find the weakest regularity assumption on $f$ giving a solution $u \in W^{4,p}(\Omega)$.}
\end{remark}

\begin{remark} \emph{At least some conditions on $G$ are required to solve the second boundary value problem for arbitrary $f$. Indeed it was shown by Trudinger-Wang \cite{TW02} that if $n=2$ and $G$ is the \emph{convex} function $G(d) = d^2$, the equation $U^{ij}w_{ij} =f$ does not admit smooth solutions for positive $f$.  They constructed a radial function $u$, not $C^3$ smooth, with $U^{ij}w_{ij}$ constant and positive.}
\end{remark}

We give the proof of Theorem \ref{maintheorem} in Section \ref{section2}.   Since it takes no extra work, we will prove it for a general $G$ satisfying (A1)-(A3) above.  We will also deal with the cases of $f$ satisfying the weaker regularity assumptions described above (see Theorem \ref{thmgeneral} below).  

The key new estimate is the $\sup |u|$ bound, established in Lemma \ref{lemmau}.

In Section \ref{sectionproper}, we describe why the result of Theorem \ref{maintheorem}, and the nonexistence of solutions for some $f$ in the case $n=1$, are natural in the context of Monge-Amp\`ere functionals.  
Namely, we write down a functional associated to the second boundary value problem (\ref{mainequation}) and show that when $n\ge 2$ it satisfies a ``properness'' condition, which is somewhat analogous to the stability conditions of Donaldson \cite{D1}.  
When $n=1$, solutions to (\ref{mainequation}) hold if and only if we have properness.

Some remarks about notation.  We will use  $C$, $C'$ to denote uniform constants, which may differ from line to line, and whose uniformity will be clear from the context.  Integrals over $\Omega$ and $\partial \Omega$ will be taken with respect to $dx$ and the usual $(n-1)$-surface measure $ds_x$.

\section{Proof of Theorem \ref{maintheorem}}  \label{section2}

Let $G$ be a strictly concave function satisfying (A1), (A2) and (A3).  As in Theorem \ref{maintheorem}, assume that $n\ge 2$ and $\Omega$ is a uniformly convex domain.
We first prove  \emph{a priori} estimates for a solution $u$ of (\ref{mainequation}).

\begin{theorem} \label{thmapriori} 
The following estimates hold.
\begin{enumerate}
\item [(i)] Suppose $\partial \Omega \in C^{3,1}$, $f \in L^{\infty}(\Omega)$, $\varphi \in C^{3, 1}(\overline{\Omega})$ and $0<\psi \in C^{1, 1}(\overline{\Omega})$.  
If $u \in W^{4, p}(\Omega)$ is a strictly convex  solution of the second boundary value problem (\ref{mainequation}) with $p>n$, then 
$$\| u \|_{W^{4, p}(\Omega)}\le C, \quad \textrm{and } d \ge C^{-1}>0,$$
for a constant $C$ depending only on $n$, $p$, $\Omega$, the function $G$, $\| f\|_{L^{\infty}(\Omega)}$, $\| \varphi\|_{C^{3, 1}(\overline{\Omega})}$, $\| \psi\|_{C^{1, 1}(\ov{\Omega})}$ and $\inf_{\partial \Omega}\psi$.
\item [(ii)]  Suppose $\partial \Omega \in C^{4, \alpha}$ for some $\alpha \in (0,1)$, $f \in C^{\alpha}(\overline{\Omega})$, $\varphi \in C^{4, \alpha}(\overline{\Omega})$ and $0<\psi \in C^{2, \alpha}(\overline{\Omega})$. 
 If $u \in C^{4, \alpha}(\ov{\Omega})$ is a strictly convex  solution of the second boundary value problem (\ref{mainequation})  then 
$$\| u \|_{C^{4, \alpha}(\overline{\Omega})} \le C, \quad \textrm{and } d \ge C^{-1}>0,$$
for a constant $C$ depending only on $n$, $\alpha$, $\Omega$, the function $G$, $\| f\|_{C^{\alpha}(\ov{\Omega})}$, $\| \varphi\|_{C^{4, \alpha}(\overline{\Omega})}$, $\| \psi\|_{C^{2, \alpha}(\ov{\Omega})}$ and $\inf_{\partial \Omega}\psi$.
\end{enumerate}
\end{theorem}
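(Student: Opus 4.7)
The plan is to prove part (i) first; part (ii) then follows by a Schauder-type bootstrap identical to that in Trudinger-Wang \cite{TW08}. The argument splits into three stages: a $\sup_\Omega|u|$ bound, a positive lower bound on $d = \det D^2 u$, and the higher-order estimates. The latter two stages are essentially already carried out by Trudinger-Wang and Le once one has $\sup|u| \le C$ in hand, so the genuinely new ingredient (Lemma \ref{lemmau}) is the $L^\infty$ bound on $u$ for arbitrary positive $f \in L^\infty(\Omega)$.

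For the $L^\infty$ bound, the upper bound $u \le \max_{\partial\Omega}\vp$ is immediate by convexity; only a lower bound is at issue. My approach is to test the PDE against $u - \vp$, which vanishes on $\partial\Omega$, and integrate by parts twice, using the divergence-free property $\partial_j U^{ij} = 0$ of the cofactor matrix, the identity $u_{ij}U^{ij} = nd$, and the boundary condition $w(d) = \psi$. This yields an integral identity schematically of the form
\[
n\int_\Omega d\, w(d)\, dx = \int_\Omega (u-\vp) f \, dx + \int_\Omega \vp_{ij}\, U^{ij}\, w(d)\, dx + B,
\]
where $B$ is a boundary integral in $\vp$, $\psi$ and the normal derivative $\partial_\nu u$ on $\partial\Omega$, the latter controlled by standard barriers. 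Assumption (A2) converts the left side into control of $\int_{\{d \ge 1\}} d\,dx$, while (A3) prevents the measure $d\,dx$ from concentrating on $\{d<1\}$. Combined with the Aleksandrov maximum principle
\[
|u(x) - \vp(x)|^n \le C\, \mathrm{dist}(x,\partial\Omega) \int_\Omega d\, dx,
\]
one controls $\sup_\Omega |u-\vp|$ by a sublinear power of itself, closing the bound by an iteration that uses $n \ge 2$ essentially through the exponent $1/n$ in the Aleksandrov inequality and its interplay with (A1)--(A2).

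Once $\|u\|_{L^\infty(\Omega)} \le C$ is secured, the bound $d \ge C^{-1} > 0$ follows from an ABP-type estimate for the linearized Monge-Amp\`ere operator applied to $U^{ij} w_{ij} = f$ with Dirichlet data $w = \psi$, in the spirit of Caffarelli-Guti\'errez and Le \cite{Le}: since $w = d^{\theta - 1}$ is strictly decreasing in $d$, an upper bound on $w$ yields the desired positive lower bound on $d$. With $\|u\|_{L^\infty}$ and $d \ge C^{-1}$ in hand, the Monge-Amp\`ere boundary $C^{2,\alpha}$ theory of Trudinger-Wang gives $u \in C^{2,\alpha}(\ov\Omega)$. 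Then $U^{ij}$ is uniformly elliptic with $C^\alpha$ (or $L^\infty$) coefficients, and standard $L^p$, respectively Schauder, theory applied to the equation for $w$ with $w = \psi$ on $\partial\Omega$ produces $W^{2,p}$ (resp.\ $C^{2,\alpha}$) estimates on $w$, hence on $d$; the standard Monge-Amp\`ere boundary regularity bootstrap then delivers the claimed $W^{4,p}$ and $C^{4,\alpha}$ bounds on $u$. The main obstacle is the $L^\infty$ bound on $u$ itself: the failure of the theorem in $n=1$ for large positive $f$ confirms that any proof in higher dimensions must exploit $n \ge 2$ essentially, exactly as the Aleksandrov-type argument above does.
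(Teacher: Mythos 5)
There is a genuine gap in the proposed proof of the $\sup_\Omega|u|$ bound (which you correctly identify as the crux). Testing $U^{ij}(w(d))_{ij}=f$ against $u-\varphi$ and integrating by parts twice yields the valid identity
\[
n\int_\Omega d\,w \;=\; \int_\Omega (u-\varphi)f \;+\; \int_\Omega \varphi_{ij}\,U^{ij}\,w \;+\; \int_{\partial\Omega}\psi\,(u_\nu-\varphi_\nu)\,U^{nn},
\]
but this does not close. Two of the terms have no a priori control. First, $n\int_\Omega d\,w$: assumption (A2) gives only a \emph{lower} bound $dw\ge c$ for $d\ge1$, not an upper bound; e.g.\ in the affine case $dw=d^{1/(n+2)}$ is unbounded, and even when $G=\log d$ (so $dw\equiv1$) this term tells you nothing about $\int d$, which is what your Aleksandrov step would need. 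Second, $\int_\Omega \varphi_{ij}\,U^{ij}\,w$ cannot be bounded by the data: $U^{ij}w$ is comparable to the inverse Hessian $u^{ij}$ when $\theta=0$ and has no a priori size. Third, the boundary integral $\int_{\partial\Omega}\psi\,(u_\nu-\varphi_\nu)\,U^{nn}\approx\int_{\partial\Omega}K\psi\,u_\nu^{\,n}$ is \emph{not} controlled by barriers: convexity gives a barrier-type \emph{upper} bound on $u_\nu$ only in terms of $\inf_\Omega u$, which is precisely the quantity you are trying to estimate, so invoking barriers here is circular. Finally, Aleksandrov's estimate $|u(x)-\varphi(x)|^n\lesssim\operatorname{dist}(x,\partial\Omega)\int_\Omega d$ requires a bound on $\int_\Omega d$ which you do not have, and the claimed ``iteration using a sublinear power of itself'' is not substantiated.

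The missing ingredient is the paper's comparison device. One introduces the Trudinger--Wang solution $\tilde u$ of the \emph{same} boundary value problem with $f=0$, sets $u_t=t\tilde u+(1-t)u$ and $A(t)=\int_\Omega G(\det D^2u_t)$, and exploits the concavity of $A$ (which uses (A1)). Writing the chord inequalities $A(1)-A(0)\le A'(0)$ and $A(0)-A(1)\le -A'(1)$, integrating each $A'$ by parts twice and adding, the $A$-terms and the $\varphi_{ij}$-type interior terms cancel completely, and one is left with
\[
\int_\Omega fu \;+\; \int_{\partial\Omega}\psi\,U^{ij}(u_j-\tilde u_j)\nu_i \;+\; \int_{\partial\Omega}\psi\,\tilde U^{ij}(\tilde u_j-u_j)\nu_i \;\le\; C,
\]
where the first boundary term behaves like $\int_{\partial\Omega}K\psi\,u_\nu^{\,n}$ (positive and on the \emph{bounded-above} side), the $\tilde u$-terms are controlled by the Trudinger--Wang $C^2$ bounds, and $\bigl|\int_\Omega fu\bigr|\le\|f\|_{L^1}\sup|u|\lesssim\int_{\partial\Omega}u_\nu$. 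For $n\ge2$ the degree-$n$ boundary term dominates the degree-$1$ term, and the estimate closes. This is where $n\ge2$ enters — not through the exponent in Aleksandrov's inequality but through the comparison of $u_\nu^n$ against $u_\nu$ on $\partial\Omega$. Once $\sup|u|$ is bounded, your outline of the remaining steps (lower and upper bounds on $d$ via ABP, then Caffarelli/Trudinger--Wang boundary $C^{2,\alpha}$, then Schauder/$L^p$ bootstrap for $w$) matches the paper's route, though note that the paper additionally needs an interior maximum principle with a multiplier $e^{-Mu}$ to get the \emph{lower} bound on $w$ (upper bound on $d$) — an ABP estimate for $w$ alone gives the upper bound on $w$, i.e.\ lower bound on $d$, but not the other direction.
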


In what follows, suppose that $u$ solves the second boundary value problem as in part (i) of Theorem \ref{thmapriori}. The key estimate is:

\begin{lemma}\label{lemmau} We have
$$\sup_{\Omega} |u| \le C.$$
where $C$ depends only on $n$, $p$, $\Omega$, the function $G$, $\| f\|_{L^1(\Omega)}$, $\| \varphi\|_{C^{3, 1}(\overline{\Omega})}$, $\| \psi\|_{C^{1, 1}(\ov{\Omega})}$ and $\inf_{\partial \Omega}\psi$.

\end{lemma}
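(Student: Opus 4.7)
The upper bound $\sup_\Omega u \le \max_{\partial \Omega}\varphi$ is immediate from the convexity of $u$, so the entire content of the lemma is to bound $M := -\inf_\Omega u$ above in terms of the data. My plan is to combine an integration-by-parts identity from testing the equation against $u$ with Alexandrov's maximum principle, closing the loop through a bootstrap that crucially uses $n \ge 2$.

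First I would test $L[u] = f$ against $u$ and integrate by parts twice, using the divergence-free property $\partial_j U^{ij} = 0$ of the cofactor matrix and $u_{ij} U^{ij} = nd$. Applying the boundary conditions $u = \varphi$ and $w(d) = \psi$ on $\partial \Omega$ gives
\begin{equation*}
n \int_\Omega d\, w(d)\, dx \;=\; \int_\Omega u f \, dx \;-\; \int_{\partial \Omega}\!\bigl(\varphi\, U^{ij}w_i\nu_j - \psi\, u_j U^{ij}\nu_i\bigr) ds_x.
\end{equation*}
Since $d\, w(d) = d^\theta = \theta\, G(d)$ (with the parallel identity for $\theta=0$, where $G = \log d$), the left-hand side is essentially the energy $\int G(d)\, dx$, while $|\int u f| \le M \|f\|_{L^1}$. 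To estimate the boundary term, I would use that $w(d) = \psi > 0$ pins $d = \psi^{-1/(1-\theta)}$ uniformly above and below on $\partial \Omega$, which combined with $u = \varphi \in C^{3,1}$ and local boundary regularity for the Monge-Amp\`ere equation in the spirit of Trudinger-Wang \cite{TW08} controls the relevant tangential pieces of $D^2 u$ and hence $U^{ij}$ along $\partial \Omega$. The normal-derivative piece $\int_{\partial \Omega} \varphi\, U^{ij} w_i \nu_j\, ds_x$ can then be converted back to an interior integral by the divergence theorem and $\partial_j U^{ij} = 0$, producing $\int_\Omega \varphi f\, dx + (\text{bounded surface term}) - \int_\Omega w(d)\, U^{ij}\varphi_{ij}\, dx$. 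Altogether this is intended to yield
\begin{equation*}
\int_\Omega G(d)\, dx \;\le\; C(1 + M).
\end{equation*}

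The last step is Alexandrov's maximum principle for the convex function $u$ with $u|_{\partial \Omega} = \varphi$:
\begin{equation*}
M \;\le\; \|\varphi\|_{L^\infty} + C_\Omega \Bigl(\int_\Omega d\, dx\Bigr)^{1/n}.
\end{equation*}
The main obstacle, and the essentially new ingredient beyond the $f \le 0$ case, is bridging $\int G(d) = \int d^\theta/\theta$ (controlled by the energy identity) and $\int d\, dx$ (appearing in Alexandrov), since $\theta < 1/n < 1$ so neither direct H\"older interpolation nor Jensen's inequality goes in the useful direction. I expect this to be handled by exploiting the pointwise bound on $d$ at $\partial \Omega$ together with an integral estimate for the linearized Monge-Amp\`ere operator $L^* = U^{ij}\partial_{ij}$ (in the spirit of the work of Le, Savin and Caffarelli-Guti\'errez), producing a bound $\int_\Omega d\, dx \le C(1 + M^\gamma)$ for some exponent $\gamma < n$. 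Substituting into Alexandrov then gives $M \le C + C M^{\gamma/n}$, and since $\gamma/n < 1$ this forces $M \le C$. The hypothesis $n \ge 2$ is used exactly in this final bootstrap, consistent with the known failure of the lemma in dimension one that Trudinger-Wang pointed out.
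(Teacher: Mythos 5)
Your proposal takes a genuinely different route from the paper's, and as written it has gaps that I do not see how to close.

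The paper does not test the equation against $u$ directly. Instead it introduces the reference solution $\tilde{u}$ (the Trudinger--Wang solution with $f=0$ and the same boundary data, so $\|\tilde u\|_{C^2}$ and $\tilde d^{-1}$ are a priori bounded), sets $u_t = t\tilde u + (1-t)u$, and uses concavity of $A(t) = \int_\Omega G(\det D^2 u_t)$, which follows from (A1). Writing $A(1)-A(0) \le A'(0)$ and $A(0)-A(1)\le -A'(1)$ and adding yields an inequality involving only $\int_\Omega fu$ and boundary integrals. Because the variation $\eta = \tilde u - u$ vanishes on $\partial\Omega$, all troublesome boundary terms involving $w_i$ drop out in the integration by parts, and because $\eta$ has vanishing tangential derivative on $\partial\Omega$, only $U^{nn}(u_\nu-\tilde u_\nu)$ survives in $U^{ij}\eta_j\nu_i$ --- no mixed cofactors $U^{nj}$. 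The expansion $U^{nn} = K u_\nu^{n-1} + O(u_\nu^{n-2})$ then gives $\int_{\partial\Omega} K\psi\, u_\nu^n \lesssim \int_{\partial\Omega} u_\nu^{n-1} + \int_{\partial\Omega} u_\nu + C$, which bounds $u_\nu$ (hence $\sup|u|$) precisely because $n\ge 2$. No estimate on $\int_\Omega d\,dx$ is ever needed.

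Your direct test against $u$ runs into three problems. First, after you unwind the troublesome $\int_{\partial\Omega}\varphi\, U^{ij}w_i\nu_j$ term by the divergence theorem, the two boundary contributions combine (using $u=\varphi$ on $\partial\Omega$, so $u_j-\varphi_j$ is normal) to $\int_{\partial\Omega}\psi\,U^{nn}(u_\nu - \varphi_\nu)$, which is \emph{positive} and of order $M^n$. With your orientation this term sits on the wrong side of the identity, so you cannot conclude $\int_\Omega G(d)\,dx \le C(1+M)$ --- quite the opposite, $\int_\Omega G(d)$ is small compared to the boundary term. Second, the conversion also produces the interior term $\int_\Omega w\, U^{ij}\varphi_{ij}\,dx$, which is controlled by $\|\varphi\|_{C^2}\int_\Omega w\, \mathrm{tr}(U^{ij})\,dx$; the trace of the cofactor matrix is not a priori bounded, so this is not a ``bounded surface term.'' Third, and most seriously, the claimed ``bridge'' $\int_\Omega d\,dx \le C(1+M^\gamma)$ with $\gamma<n$ is stated as a hope (``I expect this to be handled by\ldots'') rather than argued, and it is exactly the hard part: since $\theta < 1/n < 1$, H\"older and Jensen point the wrong way, and I do not know of a linearized Monge--Amp\`ere estimate that delivers it. The paper sidesteps this entirely by never needing $\int_\Omega d\,dx$ at all.

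The missing idea is the comparison with the reference solution $\tilde u$ and the use of concavity of $A(t)$ to produce an inequality whose boundary terms have the right sign structure, so that $\int_{\partial\Omega} u_\nu^n$ dominates everything else when $n\ge 2$.
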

\begin{proof}  Note that since $u$ is convex and equal to $\varphi$ on $\partial \Omega$, bounding $\sup_{\Omega} |u|$ is equivalent to bounding $\inf_{\Omega} u$ from below.

Let $\tilde{u} \in W^{4, p}(\Omega)$ be the Trudinger-Wang solution of (\ref{mainequation}) with $f=0$ and the same boundary data as $u$.  Namely, $\tilde{u}$ solves
$$L[\tilde{u}] = \tilde{U}^{ij} \tilde{w}_{ij} =0, \quad \tilde{u} = \varphi \quad \textrm{on } \partial \Omega, \quad \tilde{w} = \psi \quad \textrm{on } \partial \Omega.$$
Here we are using the obvious notation $\tilde{w}=G'(\tilde{d})$ for $\tilde{d}=\det D^2 \tilde{u}$, and $(\tilde{U}^{ij})$ for the cofactor matrix of $(\tilde{u}_{ij})$.  Note that by the estimates of Trudinger-Wang \cite{TW08} we have in particular the bounds
\begin{equation} \label{TWb}
\| \tilde{u} \|_{C^{2}(\ov{\Omega})} \le C, \quad \textrm{and } \tilde{d} \ge C^{-1}>0.
\end{equation}

Write $u_t = t\tilde{u} + (1-t) u$ for $t \in [0,1]$ and $A(t) = \int_{\Omega} G(\det D^2u_t)$.  By assumption (A1), the function $A(t)$ is concave (see for example \cite[Remark 2.1]{Zhou}).  Indeed, writing $\eta = \tilde{u} - u$ we have
\[
\begin{split}
A''(t) = {} & \int_{\Omega} \left( (w_t' + \frac{w_t}{d_t}) (U_t^{ij}\eta_{ij})^2  - \frac{w_t}{d_t} U_t^{ik} U_t^{j\ell} \eta_{k\ell} \eta_{ij} \right) \\
\le {} & \int_{\Omega} \left( w_t' + (1-\frac{1}{n} ) \frac{w_t}{d_t} \right) (U_t^{ij}\eta_{ij})^2 \le 0,
\end{split}
\]
where the quantities $w_t, w_t', d_t$ and $U_t^{ij}$ here are with respect to the convex function $u_t$.

  Integrating by parts twice and using the fact that $(U^{ij})_j=0$ we obtain
\begin{equation} \label{A10}
\begin{split}
A(1) -A(0) \le  A'(0) 
=  {} & \int_{\Omega} w U^{ij} (\tilde{u}_{ij} - u_{ij}) \\
= {} & \int_{\partial \Omega} \psi U^{ij} (\tilde{u}_j - u_j) \nu_i + \int_{\Omega} f(\tilde{u}-u),
\end{split}
\end{equation}
where $\nu_i$ are the components of the outward unit normal to $\partial \Omega$.  Similarly, using now the fact that $\tilde{U}^{ij}\tilde{w}_{ij}=0$,
\begin{equation} \label{A01}
\begin{split}
A(0)- A(1) \le - A'(1) 
=  {} & \int_{\Omega} \tilde{w} \tilde{U}^{ij} (u_{ij} - \tilde{u}_{ij}) \\
= {} & \int_{\partial \Omega} \psi \tilde{U}^{ij} (u_j - \tilde{u}_j) \nu_i.
\end{split}
\end{equation}

Adding these inequalities gives
\begin{equation} \label{key1}
\int_{\Omega} fu + \int_{\partial \Omega} \psi U^{ij} (u_j -\tilde{u}_j ) \nu_i  + \int_{\partial \Omega} \psi \tilde{U}^{ij} (\tilde{u}_j - u_j) \nu_i \le C.
\end{equation}
The argument for (\ref{key1}) is similar to Trudinger-Wang's proof of uniqueness  \cite[Lemma 7.1]{TW08}, except that here we take solutions of two \emph{different} equations.

We rewrite the integrals over $\partial \Omega$ as follows.  Given a point $p \in \partial \Omega$, choose coordinates $x_1, \ldots, x_n$ centered at $p$ so that the unit outward normal $\nu$ is in the negative $x_n$ direction.  Then, at $p$, since $u=\tilde{u}$ along $\partial \Omega$,
\begin{equation} \label{bdycalc0}
U^{ij} (u_j - \tilde{u}_j) \nu_i = U^{nn} (u_{\nu} - \tilde{u}_{\nu}), \quad \tilde{U}^{ij} (\tilde{u}_j - u_j) \nu_i = \tilde{U}^{nn} (\tilde{u}_{\nu} - u_{\nu}),
\end{equation}
where $u_{\nu} = - u_n$ is the derivative of $u$ in the direction of $\nu$.

We claim that at $p$,
\begin{equation} \label{bdycalc}
U^{nn}  = K (u_{\nu})^{n-1} + E, \quad \textrm{with } |E| \le C (1+ (u_{\nu})^{n-2}), 
\end{equation}
where $K$ denotes the Gauss curvature of $\partial \Omega$ at $p$.  

We now prove the claim, using an argument similar to that in \cite[Section 2]{CNS}. Since $\Omega$ is strictly convex, we may write  $\partial \Omega$ locally near $0$ as the graph $(x', \rho(x')) \in \mathbb{R}^n$ where $x'=(x_1, \ldots, x_{n-1})$ and
$$\rho(x') = \frac{1}{2} \sum_{\alpha, \beta=1}^{n-1} B_{\alpha \beta} x_{\alpha} x_{\beta} + O (|x'|^3),$$
for $(B_{\alpha \beta})$ a positive definite symmetric $(n-1) \times (n-1)$ matrix.  Note that the determinant of this matrix is precisely the Gauss curvature $K$ at $p$.
Since $u=\varphi$ on $\partial \Omega$ we have
$$(u-\varphi)(x', \rho(x')) =0$$
for $x'$ small.  
Hence for $\alpha \in \{1, \ldots, n-1 \}$ we have
$$ (\partial_{\alpha} + (\partial_{\alpha} \rho) \partial_n)(u-\varphi) =0,$$
for all small $x'$.  Differentiating with respect to $x_{\beta}$ for $\beta \in \{ 1, \ldots, n-1 \}$, we get
\begin{equation*}\label{star}
\qquad \qquad (\partial_{\beta} + (\partial_{\beta} \rho) \partial_n )(\partial_{\alpha} + (\partial_{\alpha} \rho) \partial_n)(u-\varphi) =0.
\end{equation*}
But $\partial_{\alpha} \rho = \sum_{\beta=1}^{n-1} B_{\alpha \beta} x_{\beta} +O(|x'|^2)$, and in particular it vanishes at $x=0$.  Hence at $x=0$ we obtain
\begin{equation*}\label{boundarycalculation}
u_{\alpha \beta}-\varphi_{\alpha \beta} +B_{\alpha \beta}  (u_n-\varphi_n) =0.\end{equation*}
We may rewrite this as
$$u_{\alpha \beta} = B_{\alpha \beta} u_{\nu} + \varphi_{\alpha \beta} -  B_{\alpha \beta} \varphi_{\nu}, \quad \alpha, \beta \in \{ 1, \ldots, n-1\}.$$
Taking determinants  proves the claim (\ref{bdycalc}).

We can now complete the proof of the lemma.  Since $u$ is convex and we wish to prove that $\inf_{\Omega} u$ is bounded we may assume that  $u_{\nu}$ is large at every point of $\partial \Omega$.  Indeed, the convexity of $u$  implies that
\begin{equation} \label{un}
u_{\nu}(x) \ge \frac{ \varphi(x)- \inf_{\Omega} u }{\textrm{diam}(\Omega)} - | D \varphi (x)|, \quad \textrm{for all } x\in \partial \Omega,
\end{equation}
provided the right hand side is nonnegative, which we may assume without loss of generality.   In particular, $Cu_{\nu} \ge \sup_{\Omega}|u|$.  To see (\ref{un}), let $V$ be an outward pointing unit vector at $x$ in the direction of the line segment between $x$ and the point in $\Omega$ at which $u$ achieves its minimum.  Then use the inequalities $u_{\nu} \langle V,  \nu \rangle \ge D_{V} u - | D \varphi|$ at $x$, and $0 <  \langle V,  \nu \rangle \le 1$.

On the other hand, $\tilde{u}_{\nu}$ and $\tilde{U}^{nn}$ are uniformly bounded  by (\ref{TWb}).  Hence we may assume that 
\begin{equation}
 \tilde{u}_{\nu} \le \frac{1}{4} u_{\nu}, \quad \tilde{U}^{nn} \le \frac{1}{4} U^{nn},
\end{equation}
where for the second inequality we have used (\ref{bdycalc}).  Thus we can assume 
\begin{equation} \label{sb}
U^{nn}(u_{\nu} - \tilde{u}_{\nu}) + \tilde{U}^{nn} (\tilde{u}_{\nu} - u_{\nu}) \ge \frac{1}{2} U^{nn} u_{\nu}.
\end{equation}
Combining (\ref{key1}), (\ref{bdycalc0}), (\ref{bdycalc}) and (\ref{sb}) gives
\begin{equation} \label{finally}
 \frac{1}{2} \int_{\partial \Omega} K \psi (u_{\nu})^n  \le C \int_{\partial \Omega} (u_{\nu})^{n-1} - \int_{\Omega} fu.
\end{equation} 
On the other hand, by (\ref{un}),
$$- \int_{\Omega} fu \le \| f\|_{L^1(\Omega)} \sup_{\Omega} |u| \le C \int_{\partial \Omega} u_{\nu}.$$

Since we may assume that $u_{\nu}$ is large compared to $2K^{-1} \psi^{-1}$, 
the inequality (\ref{finally}) implies the bound
$$\int_{\partial \Omega} (u_{\nu})^n \le C,$$
and hence by (\ref{un}) a uniform  bound for $\sup_{\Omega} |u|$.
\end{proof}

We can now prove estimates for $w$ using maximum principle arguments.  The  following lemma is contained in Trudinger-Wang \cite{TW05} and Le \cite{Le}, but we include the proof for the reader's convenience.

\begin{lemma} \label{lemmaw} There exists a uniform constant $p_0>0$ depending only on the constants of Lemma \ref{lemmau}, such that on $\Omega$,
\begin{enumerate}
\item[(i)] $C^{-1} \le w \le C$
\item[(ii)] $C^{-1} \le d \le C$,
\end{enumerate}
where $C$ depends only on $\|f^-\|_{L^n}$, $\| f^+\|_{L^{\infty}(\Omega)}$ and the constants in Lemma \ref{lemmau}.
\end{lemma}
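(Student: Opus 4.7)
The plan is to apply the Aleksandrov--Bakelman--Pucci (ABP) maximum principle to the equation $U^{ij}w_{ij} = f$, viewed as a non-divergence linear equation for $w$ with elliptic coefficient matrix $(U^{ij})$ satisfying $\det(U^{ij}) = d^{n-1}$. The upper bound on $w$ is obtained by a self-improvement argument that crucially exploits the assumption $\theta < 1/n$, while the lower bound is obtained by a maximum-principle argument on a suitable auxiliary function.

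\textbf{Upper bound on $w$.} Applying ABP to $v := w - \sup_{\partial\Omega}\psi$, which is nonpositive on $\partial\Omega$ and satisfies $U^{ij}v_{ij} = f \ge -f^-$, gives
$$\sup_{\Omega} w \;\le\; \sup_{\partial\Omega}\psi + C\,\mathrm{diam}(\Omega)\left(\int_{\Omega} \frac{(f^-)^n}{d^{n-1}}\,dx\right)^{\!1/n}.$$
Using the identity $d^{n-1} = w^{-(n-1)/(1-\theta)}$, the integrand equals $(f^-)^n\,w^{(n-1)/(1-\theta)}$, which is pointwise bounded by $(\sup_\Omega w)^{(n-1)/(1-\theta)}(f^-)^n$. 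Writing $W := \sup_\Omega w$, this yields the self-improving inequality
$$W \;\le\; \sup_{\partial\Omega}\psi + C\,\|f^-\|_{L^n(\Omega)}\,W^{(n-1)/[n(1-\theta)]}.$$
Because $\theta < 1/n$, the exponent $\alpha := (n-1)/[n(1-\theta)]$ is strictly less than $1$, so an inequality of the form $W \le A + BW^\alpha$ with $\alpha < 1$ implies $W \le \max(2A,(2B)^{1/(1-\alpha)})$, yielding the desired upper bound on $w$, and equivalently $d \ge C^{-1} > 0$.

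\textbf{Lower bound on $w$.} Consider the auxiliary function
$$H := w - \beta\bigl(u - \inf_\Omega u\bigr)$$
for a small $\beta > 0$ to be chosen. Since $u - \inf u \ge 0$, we have $w \ge H$, so it suffices to bound $\min_{\overline{\Omega}} H$ from below by a positive constant. If the minimum of $H$ occurs at an interior point $x_0$, then $D^2 H(x_0) \ge 0$, so $U^{ij}H_{ij}(x_0) \ge 0$, i.e.\ $f(x_0) \ge \beta n\, d(x_0)$, which forces $d(x_0) \le \|f^+\|_{L^\infty}/(\beta n)$ and hence $w(x_0) = d(x_0)^{\theta-1} \ge (\beta n/\|f^+\|_{L^\infty})^{1-\theta} > 0$. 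On $\partial\Omega$ we have $H \ge \inf_{\partial\Omega}\psi - \beta(\sup_\Omega|\varphi| + \sup_\Omega|u|)$, which is at least $\tfrac{1}{2}\inf_{\partial\Omega}\psi$ provided $\beta$ is chosen small in terms of $\inf_{\partial\Omega}\psi$, $\sup|\varphi|$, and $\sup|u|$ (the last supplied by Lemma \ref{lemmau}). Either way $\min_{\overline{\Omega}} H \ge c > 0$, giving $w \ge c$ on $\Omega$.

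\textbf{Main obstacle.} The main difficulty is the potentially degenerate weight $d^{n-1}$ in the ABP estimate, which a priori precludes converting the integral into an explicit bound in terms of $\|f^-\|_{L^n}$ alone. The self-improvement argument circumvents this by reducing the bound on $W$ to itself with a sublinear exponent; the fact that this exponent is strictly less than $1$ rests entirely on the structural hypothesis $\theta < 1/n$. Once the upper bound on $w$ (and hence lower bound on $d$) is secured, the lower bound on $w$ is an elementary comparison argument using $H$.
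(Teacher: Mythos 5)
Your overall strategy mirrors the paper's: ABP maximum principle for the upper bound on $w$ (giving the lower bound on $d$), then an interior maximum principle for the lower bound on $w$. The upper bound argument is essentially the paper's in concrete form — the paper abstracts the self-improvement step into assumption (A3), $d^{1-1/n}w\to\infty$ as $d\to 0$, which your power-law computation instantiates with the exponent $(n-1)/[n(1-\theta)]<1$.

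The lower bound is where you diverge, and where there is a real gap. You use $H = w - \beta(u - \inf_\Omega u)$, while the paper uses $Q = \log w - Mu$. Two issues. First, a presentational gap: in the interior-minimum case you establish $w(x_0) \ge (\beta n/\|f^+\|_\infty)^{1-\theta}$ but then assert $\min_{\overline\Omega} H \ge c>0$; what you actually get is $H(x_0) \ge (\beta n/\|f^+\|_\infty)^{1-\theta} - \beta\,\mathrm{osc}(u)$, and you need to note that $\beta$ must also be taken small here for this to be positive. Second, and more seriously: this only works because $(\beta)^{1-\theta}$ dominates $\beta$ as $\beta\to 0$, which requires $\theta>0$. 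In the log case $\theta=0$ (i.e.\ $w=d^{-1}$), which is covered by the lemma, the interior bound degenerates to $H(x_0) \ge \beta\bigl(n/\|f^+\|_\infty - \mathrm{osc}(u)\bigr)$, which need not be positive for any $\beta$; the same degeneracy occurs for general $G$ satisfying only (A1)--(A3), where you cannot control how fast $w$ blows up as $d\to 0$. The paper's use of $\log w$ in the auxiliary function is precisely what fixes this: under (A2) one computes $u^{ij}Q_{ij} \le f/(wd) - Mn \le \|f^+\|_\infty/c - Mn < 0$ for $M$ large, ruling out an interior minimum outright with no dependence on how $w$ behaves near $d=0$. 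You should replace $H$ by $\log w - Mu$ (or restrict the claim to $\theta>0$). Finally, you omit the upper bound on $d$ in part (ii); in the power case it is immediate from the lower bound on $w$, and in the general case the paper deduces $wd^{1-1/n}\le C$ for $d\ge 1$ from (A1), but this should be stated.
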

\begin{proof}  
First we prove the upper bound of $w$.  Recall that $f^+ := \max (f,0)$ and $f^-:= \min(f,0)$.
We have 
$U^{ij}w_{ij}=f \ge f^-$ with $w|_{\partial\Omega}=\psi$.   Note that $\det U^{ij}=d^{n-1}$.  As in \cite[Lemma 3.1]{Le}, we apply Aleksandrov's maximum principle (see e.g. \cite[Theorem 9.1]{GT}) to give  \begin{equation}\label{Aleksandrov}\begin{split}\sup_{\Omega} w &\leq \sup_{\partial \Omega} \psi +C \left\|\frac{f^-}{d^{(n-1)/n}}\right\|_{L^n(\Omega)}\\
&\leq \sup_{\partial \Omega} \psi +C \left\| f^-\right \|_{L^n(\Omega)} \sup_{\Omega} (d^{(1-n)/n}),\\
\end{split}\end{equation}
where $C$ depends only on $n$ and $\Omega$.  The desired upper bound on $w$ follows from \eqref{Aleksandrov} and  assumption (A3) on $G$.  The lower bound for $d$ then follows immediately.

We next prove the lower bound of $w$ using the maximum principle, following Trudinger-Wang \cite{TW05}.  As there, we may assume that $w$ is in  $C^2$ by an approximation argument.  From the assumption (A2) on $G$ and the lower bound for $d$, we have $dw \ge c>0$ (after possibly shrinking the constant $c$).   Define
$$Q = \log w - Mu, \quad \textrm{for } M = \frac{\| f^+ \|_{L^{\infty}(\Omega)} +1}{nc}.$$  
We wish to bound $Q$ from below on $\ov{\Omega}$.  Suppose that $Q$ achieves a minimum at an interior point $p \in \Omega$.  Compute at $p$,
$$0 \le u^{ij} Q_{ij} = u^{ij} \left( \frac{w_{ij}}{w} - \frac{w_i w_j}{w^2} - M u_{ij} \right) \le \frac{f}{wd} - Mn \le \frac{f^+}{c} - Mn <0,$$
 a contradiction.  
  
 Hence $Q$ achieves its minimum at a point of the boundary, at which $w=\psi$.  It follows that $Q$ is bounded below, and from the bound on $\sup_{\Omega} |u|$ we obtain $w\ge C^{-1}>0$.
 
Finally, we give the upper bound for $d$. Observe that the assumption (A1) for $G$ implies that $(wd^{1-1/n})' \le 0$ and so
\begin{equation}\label{wd}
wd^{1-1/n} \le C, \quad \textrm{for } d\ge 1.
\end{equation}
The 
 upper bound for $d$ follows.
\end{proof}

Next, we include here a stronger bound for $w$ and $d$ (that is, depending on weaker norms of $f$) in the special case when $G =d^{\theta}/{\theta}$ for $\theta \in (0,1/n)$.

\begin{lemma} \label{lemmaw2} Let $G(d)=d^{\theta}/\theta$ for $\theta \in (0,1/n)$.  Then for any $q>\frac{1}{\theta}$, we have on $\Omega$,
\begin{enumerate}
\item[(i)] $C^{-1} \le w \le C$
\item[(ii)] $C^{-1} \le d \le C$,
\end{enumerate}
where $C>0$ depends only on $\theta$, $q$, $\|f^-\|_{L^n}$, $\int_{\Omega} (f^+)^q$  and the constants in Lemma \ref{lemmau}.
\end{lemma}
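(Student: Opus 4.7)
The upper bound on $w$ (and hence the lower bound on $d$) is obtained exactly as in the proof of Lemma~\ref{lemmaw}: inspecting that argument, the Aleksandrov estimate used there depends only on $\|f^-\|_{L^n(\Omega)}$, and for $G=d^\theta/\theta$ one rewrites $d^{(1-n)/n}=w^{(n-1)/(n(1-\theta))}$ to get
\[
\sup_\Omega w \;\le\; \sup_{\partial\Omega}\psi + C\|f^-\|_{L^n(\Omega)}(\sup_\Omega w)^{(n-1)/(n(1-\theta))}.
\]
Because $\theta<1/n$ the exponent is strictly less than $1$, so this inequality closes to give $\sup_\Omega w \le C$, and then $\inf_\Omega d \ge C^{-1}$ from $w = d^{\theta-1}$.

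For the lower bound on $w$, the maximum-principle argument in the proof of Lemma~\ref{lemmaw} uses a constant $M$ depending on $\|f^+\|_{L^\infty}$ and is unavailable here. Instead I will apply the Aleksandrov--Bakelman--Pucci maximum principle to $-\log w$: from
\[
u^{ij}(\log w)_{ij} \;=\; \frac{f}{wd} - \frac{u^{ij}w_iw_j}{w^2} \;\le\; \frac{f}{wd}
\]
and $\det u^{ij} = 1/d$, ABP yields
\[
-\inf_\Omega \log w \;\le\; -\log\inf_{\partial\Omega}\psi + C\Big(\textstyle\int_\Omega (f^+)^n w^{-n} d^{-(n-1)}\,dx\Big)^{1/n}.
\]
Substituting $w = d^{\theta-1}$ reduces the integrand to $(f^+)^n d^{\,1-n\theta}$, and H\"older's inequality with exponents $q/n$ and $q/(q-n)$ gives
\[
\int_\Omega (f^+)^n d^{\,1-n\theta}\,dx \;\le\; \|f^+\|_{L^q(\Omega)}^n \Big(\textstyle\int_\Omega d^{\,s}\,dx\Big)^{(q-n)/q}, \qquad s = \tfrac{(1-n\theta)q}{q-n}.
\]
A direct computation shows $s<1$ if and only if $q>1/\theta$, so the hypothesis in the lemma is exactly what is needed for the exponent $s$ to be subcritical.

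The main remaining step is an \emph{a priori} bound $\int_\Omega d^{\,s}\,dx \le C$ for the exponent $s<1$. I plan to extract this by adapting the concavity argument in the proof of Lemma~\ref{lemmau}: for any auxiliary exponent $\theta_1 \in (0, 1/n]$ the functional $A_{\theta_1}(t) = \tfrac{1}{\theta_1}\int_\Omega (\det D^2 u_t)^{\theta_1}\,dx$ is concave in $t$ by assumption (A1), and the inequality $A_{\theta_1}'(1) \le A_{\theta_1}(1) - A_{\theta_1}(0)$ combined with the bound $\int_{\partial\Omega}(u_\nu)^n \le C$ from Lemma~\ref{lemmau} and the $C^2$ control on the Trudinger--Wang solution $\tilde u$ yields $\int_\Omega d^{\,\theta_1}\,dx \le C$. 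Interpolating, this gives the required bound whenever $s \le 1/n$; in the complementary range $s \in (1/n,1)$ one additionally needs an estimate $\int_\Omega d \le C$, which I plan to derive from the divergence identity $n\int_\Omega d = \int_{\partial\Omega}U^{ij}u_j\nu_i$ together with the boundary formula~(\ref{bdycalc}) for $U^{nn}$ and the $L^n$ control of $u_\nu$. Once the $L^s$ bound on $d$ is in hand, the chain above produces $\inf_\Omega w \ge \inf_{\partial\Omega}\psi\cdot \exp(-C\|f^+\|_{L^q}) > 0$, and the upper bound on $d$ follows from $d = w^{-1/(1-\theta)}$. The hardest technical point is expected to be controlling $\int_\Omega d$ in the regime $s > 1/n$ (which occurs for small $\theta$ or small $q$), because the off-diagonal cofactors $U^{n\alpha}$ in the divergence identity involve mixed partials $u_{\alpha n}$ that are not prescribed by the Dirichlet data, so their estimate requires further use of the structure of the equation.
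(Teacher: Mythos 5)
Your treatment of the upper bound on $w$ (hence the lower bound on $d$) agrees with the paper: it is the same Aleksandrov argument as in Lemma~\ref{lemmaw}, and for $G=d^\theta/\theta$ with $\theta<1/n$ one can indeed close the inequality directly by absorbing $(\sup_\Omega w)^{(n-1)/(n(1-\theta))}$ (in the paper this is condition (A3)).

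For the lower bound on $w$, however, you have taken a genuinely different route, and it contains a real gap. Your chain of reasoning reduces the estimate to an \emph{a priori} bound on $\int_\Omega d^{\,s}$ with $s=(1-n\theta)q/(q-n)\in(0,1)$, and $s$ is genuinely close to $1$ in the interesting regimes (as $q\downarrow 1/\theta$, or as $\theta\to 0$). The concavity trick via $A_{\theta_1}$ only reaches $s\le 1/n$, because (A1) fails for $G_1=d^{\theta_1}/\theta_1$ once $\theta_1>1/n$. You propose to cover $s\in(1/n,1)$ by establishing $\int_\Omega d\le C$ from the divergence identity $n\int_\Omega d=\int_{\partial\Omega}U^{ij}u_j\nu_i$, but -- as you yourself note -- the off-diagonal cofactors $U^{n\alpha}$ entering this boundary integral involve the mixed second derivatives $u_{\beta n}$ along $\partial\Omega$, which are not controlled by the Dirichlet data or by the $L^n$ bound on $u_\nu$ from Lemma~\ref{lemmau}. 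As written, this step does not close, so the argument is incomplete precisely where it would need to be new.

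The paper sidesteps all of this. Instead of $-\log w$ it works with $Q=\tfrac{1}{w}+Mu$. The crucial structural advantage of $1/w$ over $-\log w$ is scaling: choosing $M=(\inf_\Omega w)^{-1}(4\sup_\Omega|u|+1)^{-1}=(\sup_\Omega d)^{1-\theta}(4\sup_\Omega|u|+1)^{-1}$ lets the $Mu$-terms on both sides of the ABP inequality be absorbed into $1/\inf_\Omega w$, yielding
\[
M \le C + C(\sup_\Omega d)^{\frac1n+1-2\theta}\bigl\|\,(f-Mn\,d^{2\theta-1})^+\bigr\|_{L^n(\Omega)}.
\]
The ABP error term is then estimated only over the set $\{f\ge Mn\,d^{2\theta-1}\}$, and on that set one may multiply the integrand by $\bigl(f^+/(Mn\,d^{2\theta-1})\bigr)^{q-n}\ge 1$; this brings in $\int(f^+)^q$ together with a negative power of $M$, and (since $\sup d\sim M^{1/(1-\theta)}$) produces a self-improving inequality $M\le C'M^{1-(q\theta-1)/(n(1-\theta))}$. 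The hypothesis $q>1/\theta$ is used exactly to make the exponent deficit positive, closing the estimate with no need for any $L^s$ bound on $d$. This is the key idea missing from your proposal: the constant $M$ is chosen adaptively in terms of $\inf w$ (possible because $1/w$ grows polynomially), and the positivity threshold $Mn\,d^{2\theta-1}$ itself is what converts the $L^q$ control of $f^+$ into control of $M$.
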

\begin{proof}
The only difference from the proof of Lemma \ref{lemmaw} is the lower bound for $w$.  Define
$$Q = \frac{1}{w} + Mu,$$
for $M$ a constant to be determined.  Then
\[
\begin{split}
u^{ij}Q_{ij} ={} &  - \frac{u^{ij} w_{ij}}{w^2} + 2 \frac{u^{ij} w_i w_j}{w^3} + Mn 
\\
\ge {} & -d^{1-2\theta} f + Mn \ge - \left( d^{1-2\theta} f - Mn\right)^+.
\end{split}
\]
Then Aleksandrov's maximum principle gives
\begin{equation*}
\begin{split}
\lefteqn{\frac{1}{\inf_{\Omega} w} - M \sup_{\Omega}|u|  \le  \sup_{\Omega} Q } \\ & \qquad \quad  \le  \sup_{\partial \Omega} \frac{1}{\psi} + M\sup_{\partial \Omega} \varphi + C (\sup_{\Omega} d)^{\frac{1}{n}+1-2\theta} \| (f - Mn d^{2\theta-1})^+ \|_{L^n(\Omega)}. 
\end{split}
\end{equation*}
Set $M = (\inf_{\Omega} w)^{-1}( 4 \sup_{\Omega}|u|+1)^{-1} = (\sup_{\Omega} d)^{1-\theta} ( 4 \sup_{\Omega}|u|+1)^{-1}$, so that
\begin{equation}  \label{1ow}
M \le C + C (\sup_{\Omega} d)^{\frac{1}{n}+1-2\theta} \| (f - Mn d^{2\theta-1})^+ \|_{L^n(\Omega)}.
\end{equation}

 Compute
\begin{equation} \label{1ow2}
\begin{split}
\lefteqn{\| (f - Mn d^{2\theta-1} )^+\|_{L^n(\Omega)} } \\ \le {} & \left( \int_{\{ f \ge Mnd^{2\theta-1} \} }(f^+)^n \right)^{1/n} \\
\le {} & \left( \int_{ \{ f \ge Mnd^{2\theta-1} \}} (f^+)^n \frac{ (f^+)^{q-n}} {(Mnd^{2\theta -1})^{q-n}} \right)^{1/n}.
\end{split}
\end{equation}
Combining (\ref{1ow}) and (\ref{1ow2}),
\[
\begin{split}
M \le {} & C (\sup_{\Omega} d)^{\frac{1}{n}(1-2q\theta +q)} M^{1-\frac{q}{n}}  
 \left( \int_{\Omega} (f^+)^q \right)^{1/n} \le C' M^{1- \frac{(q\theta-1)}{n(1-\theta)}}. \\
\end{split}
\]
Since $ q\theta>1$ we obtain an upper bound for $M$ and hence a lower bound for $w$.
\end{proof}

\begin{remark} \emph{
In the case $G(d)=\log d$, by considering the function $Q=-\log w+Mu$ and an argument similar to that above, we may bound $w$ from below provided
 $e^f$ lies in  $L^{p_0}(\Omega)$, although $p_0$ will depend on $\| f\|_{L^1(\Omega)}$.}
\end{remark}

It is now straightforward to finish the proof of Theorem \ref{thmapriori} by the arguments of Trudinger-Wang.

\begin{proof}[Proof of Theorem \ref{thmapriori}]  The proof is contained in the arguments of Lemma 7.3 and Lemma 7.4 of \cite{TW08}.  Nevertheless, we sketch here the basic ideas for the sake of completeness.  When $f\in L^{\infty}$ it follows from Lemma \ref{lemmaw}, the bound on $f$ and a simple barrier argument that $$|w(x) - w(x_0)| \le C|x-x_0|, \quad \textrm{for } x\in \Omega, \ x_0 \in \partial \Omega.$$
Moreover, since $w$ solves the linearized Monge-Amp\`ere equation 
\begin{equation} \label{lma}
U^{ij} w_{ij} =f,
\end{equation}
with $0< C^{-1} \le \det D^2 u \le C$, we can apply the arguments of   \cite[Lemma 7.3]{TW08} to obtain a uniform $C^{\beta}(\ov{\Omega})$ bound for $w$ for some $\beta \in (0,1)$.   This makes use of results of Caffarelli and Caffarelli-Guti\'errez \cite{Caf1, Caf2, CafGut}.

 Hence in both cases (i) and (ii), $\det D^2u$ is bounded in $C^{\beta}(\ov{\Omega})$ and the main theorem of \cite{TW08} implies that $u$ is bounded in $C^{2, \beta}(\ov{\Omega})$.  In particular, the functions $U^{ij}$ are bounded in $C^{\beta}(\ov{\Omega})$ and then standard elliptic estimates for the equation (\ref{lma}) imply that $w$ and hence $\det D^2u$ is bounded in $W^{2,p}(\Omega)$ for all $p> 1$ in case (i), and in $C^{2,\alpha}(\ov{\Omega})$ in case (ii).  The result follows. 
 \end{proof}

We can now complete the proof of Theorem \ref{maintheorem} using the Leray-Schauder degree theory argument of Trudinger-Wang \cite{TW05, TW08}.   Let $\Omega, \varphi, \psi, f$ be as in the first part of Theorem \ref{maintheorem}.

Fix $\alpha \in (0,1)$. For  a large constant $R>1$ to be determined, define a bounded set $D(R)$ in $C^{\alpha}(\ov{\Omega})$ as follows:
$$D(R) = \{ v \in C^{\alpha}(\ov{\Omega}) \ | \ v \ge R^{-1}, \ \| v\|_{C^{\alpha}(\ov{\Omega})} \le R\}.$$
Next, let $\Theta: (0,\infty) \rightarrow (0,\infty)$ be the inverse function of $w: (0,\infty) \rightarrow (0,\infty).$  

For $t \in [0,1]$, we will define an operator $\Phi_t : D(R) \rightarrow C^{\alpha}(\ov{\Omega})$ as follows.
Given $w \in D(R)$, define $u \in C^{2, \alpha}(\ov{\Omega})$ to be the unique strictly convex solution \cite{TW08} to
\begin{equation} \label{LS1}
\det(D^2 u) = \Theta(w) \quad \textrm{on } \Omega, \quad u = \varphi \quad \textrm{on } \partial \Omega.
\end{equation}
Next, let $w_t \in W^{2,p}(\Omega)$ (for some fixed $p>n$) be the unique solution to the equation
\begin{equation} \label{LS2}
U^{ij} (w_t)_{ij} = tf \quad \textrm{on } \Omega, \quad w_t=t\psi + (1-t) \quad \textrm{on } \partial \Omega.
\end{equation}
In particular, $w_t$ lies in  $C^{ \alpha}(\ov{\Omega})$.  We define $\Phi_t$ to be the map sending $w$ to $w_t$.

We note that:
\begin{enumerate}
\item[(i)] $\Phi_0(D(R)) = \{ 1\} $, and in particular, $\Phi_0$ has a unique fixed point.
\item[(ii)] The map $[0,1] \times D(R) \rightarrow C^{\alpha}(\ov{\Omega})$ given by $(t,w) \mapsto \Phi_t(w)$ is continuous.  
\item[(iii)] $\Phi_t$ is compact for each $t \in [0,1]$.
\item[(iv)] For every $t\in [0,1]$, if $w \in D(R)$ is a fixed point of $\Phi_t$ then $w \notin \partial D(R)$.
\end{enumerate}
Indeed, part (iii) follows from the standard \emph{a priori} estimates for the two separate equations (\ref{LS1}) and (\ref{LS2}).  For part (iv), let $w>0$ be a fixed point of $\Phi_t$.  Then $w \in W^{2,p}(\Omega)$ for some fixed $p>n$ and hence $u \in W^{4,p}(\Omega)$.  Next we apply Theorem \ref{thmapriori} to obtain $w>R^{-1}$ and $\| w \|_{C^{\alpha}(\ov{\Omega})} < R$ for some $R$ sufficiently large and depending only on the initial data.

Then the Leray-Schauder degree of $\Phi_t$ is well-defined for each $t$ and is constant on $[0,1]$ (see \cite[Theorem 2.2.4]{OCC}, for example).  $\Phi_0$ has a fixed point and hence $\Phi_1$ must also have a fixed point $w$, giving rise to a solution $u$ of   the second boundary value problem (\ref{mainequation}).
In the first case of Theorem \ref{maintheorem}, by the arguments above, the solution $u$ will lie in $W^{4,p}(\Omega)$ for all $p > 1$ and in the second case of Theorem \ref{maintheorem}, $u$ will lie in $C^{4, \alpha}(\ov{\Omega})$.

Note that the solution is uniformly convex since $\det D^2u \ge C^{-1}>0$.  The uniqueness follows from the same argument as in \cite[Lemma 7.1]{TW08}.  This completes the proof of Theorem \ref{maintheorem}.

We end this section by stating a more general version of the main theorem, which follows from our estimates together with an argument of Le \cite{Le}.

\begin{theorem} \label{thmgeneral}
Assume $n \ge 2$ and fix $p>n$.  Let $\Omega$ be a uniformly convex domain in $\mathbb{R}^n$ with $\partial \Omega \in C^{3, 1}$. Suppose $f \in L^{p}(\Omega)$, $\varphi \in W^{4,p}(\Omega)$ and $0<\psi \in W^{2,p}(\Omega)$.    Then the following hold:
\begin{enumerate}
\item[(a)]  Suppose that $G:(0,\infty) \rightarrow \mathbb{R}$ is a smooth strictly concave function satisfying (A1), (A2) and (A3). 
If $\| f^+ \|_{L^{\infty}(\Omega)} < \infty$ 
then there exists  a unique uniformly convex solution $u \in W^{4, p}(\Omega)$  to the second boundary value problem (\ref{mainequation}).
\item[(b)]  Suppose that $G(d) = d^\theta/\theta$ for some $\theta \in (0,1/n)$.  If 
$$\int_{\Omega} (f^+)^q < \infty$$
for some $q>1/\theta$ 
then there exists  a unique uniformly convex solution $u \in W^{4, p}(\Omega)$ to the second boundary value problem (\ref{mainequation}).
\end{enumerate}
\end{theorem}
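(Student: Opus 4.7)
The plan is to reduce Theorem \ref{thmgeneral} to Theorem \ref{maintheorem} by an approximation procedure, leveraging the fact that the a priori estimates from Lemmas \ref{lemmau}, \ref{lemmaw}, and \ref{lemmaw2} depend only on the norms of $f$ appearing in the hypotheses. First, I would mollify the data to obtain sequences $f_k \in C^\infty(\ov{\Omega})$, $\varphi_k \in C^{4,\alpha}(\ov{\Omega})$, $\psi_k \in C^{2,\alpha}(\ov{\Omega})$ with $\psi_k \ge c > 0$, converging to $(f,\varphi,\psi)$ in $L^p$, $W^{4,p}$, $W^{2,p}$ respectively, and chosen so that in case (a) we have $\|f_k^+\|_{L^\infty(\Omega)} \le \|f^+\|_{L^\infty(\Omega)}$, and in case (b) we have $\int_\Omega (f_k^+)^q \le \int_\Omega (f^+)^q$ uniformly. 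Theorem \ref{maintheorem} then produces a unique strictly convex solution $u_k \in C^{4,\alpha}(\ov{\Omega})$ of the corresponding approximate problem.

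Next, I would collect uniform a priori estimates. Lemma \ref{lemmau} gives a uniform $\sup_\Omega |u_k|$ bound, since $\|f_k\|_{L^1(\Omega)}$ is controlled by $\|f\|_{L^p(\Omega)}$. In case (a), Lemma \ref{lemmaw} then yields $C^{-1} \le w_k \le C$ and $C^{-1} \le d_k \le C$ uniformly, where the constants depend on $\|f_k^-\|_{L^n}$ (controlled since $p>n$) and $\|f_k^+\|_{L^\infty}$. In case (b), Lemma \ref{lemmaw2} produces the same conclusion using only $\int(f_k^+)^q$. These estimates depend solely on the quantities assumed uniformly bounded in the theorem's hypotheses.

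To upgrade these to uniform $W^{4,p}(\Omega)$ bounds on $u_k$, I would follow the strategy of Le \cite{Le}, as outlined in the proof of Theorem \ref{thmapriori}. With $\det D^2 u_k$ trapped between two positive constants, the Caffarelli--Guti\'errez theory applied to the linearized Monge--Amp\`ere equation $U_k^{ij}(w_k)_{ij} = f_k$, together with a boundary barrier argument, gives a uniform $C^\beta(\ov{\Omega})$ bound on $w_k$ (hence on $d_k$) for some $\beta \in (0,1)$. Trudinger--Wang's boundary regularity for Monge--Amp\`ere \cite{TW08} then promotes this to a uniform $C^{2,\beta}(\ov{\Omega})$ bound on $u_k$, so the coefficients $U_k^{ij}$ are uniformly $C^\beta$ and uniformly elliptic. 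Standard Calder\'on--Zygmund estimates for the linear equation (\ref{lma}) with $f_k \in L^p$ give uniform $W^{2,p}(\Omega)$ control on $w_k$, and hence uniform $W^{4,p}(\Omega)$ control on $u_k$.

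Finally, I would extract a subsequence $u_k$ converging weakly in $W^{4,p}(\Omega)$ and, by Sobolev embedding, strongly in $C^{3,\gamma}(\ov{\Omega})$ for some $\gamma>0$, to a limit $u \in W^{4,p}(\Omega)$ solving (\ref{mainequation}) with the prescribed boundary data. The uniform lower bound on $d_k$ persists in the limit, so $u$ is uniformly convex. Uniqueness follows exactly as in \cite[Lemma 7.1]{TW08}: the strict concavity of $A(t)$ (via (A1)) together with (\ref{A10})--(\ref{A01}) forces any two solutions to coincide. The main obstacle is step three, namely confirming that Le's Hölder estimate carries over without assuming $f \le 0$; however, since the bounds $C^{-1} \le d_k, w_k \le C$ are already in hand and the Caffarelli--Guti\'errez machinery is insensitive to the sign of the right-hand side, this adaptation is essentially routine.
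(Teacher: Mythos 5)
Your approximation argument is a genuinely different route from the paper's, which simply re-runs the Leray--Schauder degree argument of Theorem~\ref{maintheorem} with the weaker data: the paper's proof of Theorem~\ref{thmgeneral} is the two-line observation that the \emph{a priori} estimates for $w,d$ come from Lemmas~\ref{lemmaw} and \ref{lemmaw2}, and the $C^\beta(\ov\Omega)$ bound for $w$ (now that $f$ is only $L^p$) comes from citing Theorem~1.4 of Le~\cite{Le}, after which ``the rest of the arguments follow in the same way.'' Your mollify-solve-estimate-pass-to-the-limit scheme avoids touching the continuity method, but it hinges on exactly the same key ingredient that the paper cites: a H\"older estimate for $w$ up to the boundary depending only on $\|f\|_{L^p}$, $\|w\|_{L^\infty}$, and the bounds $C^{-1}\le d\le C$ --- this is precisely Le's Theorem 1.4, and you are right that it does not depend on the sign of $f$, so invoking it (rather than the $L^\infty$-barrier argument used in Theorem~\ref{thmapriori}) closes the gap you flag. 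One small slip: with $\partial\Omega\in C^{3,1}$ only, the mollified problems fall under the first case of Theorem~\ref{maintheorem} and so produce $u_k\in W^{4,p}(\Omega)$, not $C^{4,\alpha}(\ov\Omega)$; this is harmless because the uniform $W^{4,p}$ bound together with the compact embedding $W^{4,p}\hookrightarrow C^{3,\gamma}(\ov\Omega)$ is all you need to pass to the limit in the equation and the two Dirichlet conditions. With that correction your argument is sound; the paper's route is marginally shorter because it leans on the degree-theory framework already set up, while yours has the virtue of reducing cleanly to the already-established existence theorem plus a compactness step.
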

\begin{remark} In part (b), the case when $f\leq 0$ was proved by Le \cite{Le}. 
\end{remark}

\begin{proof}[Proof of Theorem \ref{thmgeneral}]
We only need to make a couple of changes compared to the proof of Theorem \ref{maintheorem}.  The \emph{a priori} estimates for $w$ and $d$ for parts (a) and (b) follow from Lemmas \ref{lemmaw} and \ref{lemmaw2} respectively.
To obtain the $C^{\beta}(\ov{\Omega})$ bound for $w$, we apply Theorem 1.4 of \cite{Le}.  The rest of the arguments follow in the same way.
\end{proof}

\section{Monge-Amp\`ere functionals} \label{sectionproper}

In this section we discuss a Monge-Amp\`ere functional whose Euler-Lagrange equation is (\ref{mainequation}).  We describe how the form of this functional suggests, at least philosophically, that the result of Theorem \ref{maintheorem} should hold.  We also show that in dimension $n=1$, existence of solutions to the second boundary value problem is equivalent to a notion of properness for this functional.

For simplicity, we assume in this section that the boundary data $\varphi$ for $u$ is zero. 
Given  $\psi>0$ on $\partial \Omega$, define
$$\mathcal{S} = \{ u \in C^2(\ov{\Omega})  \ \textrm{strictly convex on $\Omega$ } | \ u|_{\partial \Omega}=0, \ w|_{\partial \Omega}=\psi \},$$
for $w=G'(d)$.
Given a function $f$ on $\Omega$, we define a functional $\mathcal{F}: \mathcal{S} \rightarrow \mathbb{R}$ by
$$\mathcal{F}(u) = \int_{\Omega} G(d) - \int_{\Omega} uf - \frac{1}{n} \int_{\partial \Omega} K \psi \, u_{\nu}^n,$$
where $K$ is the Gauss curvature of $\partial \Omega$ and $u_{\nu}$ the outward-facing normal derivative of $u$.  In the case of $n=1$ we take $K=1$.  Note that if we have nonzero boundary data for $u$ then we need to add a lower order term, of order $O(u_{\nu}^{n-1})$, to $\mathcal{F}$. 

The next proposition shows that the equation $U^{ij}w_{ij}=f$ is the Euler-Lagrange equation for $\mathcal{F}$.  Note that the result holds for any smooth $G$.

\begin{proposition}  \label{propEL} Let $u_t$ be a smooth path in $\mathcal{S}$ with $\frac{\partial}{\partial t} u_t|_{t=0}=\eta$.  Then
$$\frac{\partial}{\partial t}\bigg|_{t=0} \mathcal{F}(u_t) = \int_{\Omega} (U^{ij} w_{ij} - f) \eta.$$
\end{proposition}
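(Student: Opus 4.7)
The plan is to differentiate the three terms in $\mathcal{F}$ separately and show that the boundary contributions cancel, leaving only the bulk term $\int_\Omega (U^{ij}w_{ij}-f)\eta$. The critical observation is that since $u_t|_{\partial\Omega}=0$ for all $t$, we have $\eta|_{\partial\Omega}=0$, so all tangential derivatives of $\eta$ vanish on $\partial\Omega$ and $\eta_j = \eta_\nu\nu_j$ there.

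First I would handle the volume term. Using $\partial_t \det D^2 u_t = U_t^{ij}(u_{t})_{ij,t}$, I get $\partial_t|_{t=0} G(d_t) = w\, U^{ij}\eta_{ij}$. Integrating by parts twice, and using the divergence-free property $(U^{ij})_j=0$ together with $\eta|_{\partial\Omega}=0$ to kill one of the two boundary contributions, gives
\[
\frac{\partial}{\partial t}\bigg|_{t=0}\int_\Omega G(d_t)\,dx = \int_\Omega U^{ij}w_{ij}\,\eta\,dx + \int_{\partial\Omega}\psi\, U^{ij}\eta_j\nu_i\,ds_x,
\]
where I used $w|_{\partial\Omega}=\psi$. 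The linear term is immediate: $\partial_t|_{t=0}(-\int_\Omega u_t f) = -\int_\Omega f\eta$. The boundary term differentiates to $-\int_{\partial\Omega} K\psi\, u_\nu^{n-1}\eta_\nu\,ds_x$.

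The main step is then to show that the boundary integral from the volume term equals $\int_{\partial\Omega} K\psi\, u_\nu^{n-1}\eta_\nu\,ds_x$, so these cancel. Since $\eta_j = \eta_\nu\nu_j$ on $\partial\Omega$, we have $U^{ij}\eta_j\nu_i = U^{ij}\nu_i\nu_j\,\eta_\nu$. Fixing a boundary point and choosing coordinates as in the proof of Lemma \ref{lemmau} (with $\nu=-e_n$ and $\partial\Omega$ given by $x_n = \rho(x')$), this reduces to $U^{nn}\eta_\nu$. The boundary calculation in Lemma \ref{lemmau}, specialized to the case $\varphi\equiv 0$, yields exactly $u_{\alpha\beta} = B_{\alpha\beta}\,u_\nu$ for $\alpha,\beta\in\{1,\ldots,n-1\}$, with no error term, so
\[
U^{nn} = \det(u_{\alpha\beta})_{\alpha,\beta=1}^{n-1} = u_\nu^{n-1}\det(B_{\alpha\beta}) = K\, u_\nu^{n-1}.
\]
Plugging this into the boundary integral and combining the three derivatives yields the stated formula.

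The only part that needs care is the boundary identification $U^{nn}=K u_\nu^{n-1}$; but this is precisely (and more cleanly than) the computation already carried out in Lemma \ref{lemmau}, where the $\varphi=0$ assumption removes the lower-order error $E$. Note also that the compatibility condition $\partial_t w_t|_{\partial\Omega}=0$ arising from $w_t|_{\partial\Omega}=\psi$ is never needed: only $\eta|_{\partial\Omega}=0$ is used in the integration by parts, so the computation applies to any admissible variation within $\mathcal{S}$.
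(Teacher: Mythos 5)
Your proof is correct and follows essentially the same route as the paper: differentiate each of the three terms, integrate the volume term by parts twice using $(U^{ij})_j=0$ and $\eta|_{\partial\Omega}=0$, and then identify the surviving boundary integral $\int_{\partial\Omega}\psi\, U^{ij}\eta_i\nu_j$ with $\int_{\partial\Omega} K\psi\, u_\nu^{n-1}\eta_\nu$ via the boundary-coordinate computation from the proof of Lemma \ref{lemmau} specialized to $\varphi\equiv 0$. The remark that only $\eta|_{\partial\Omega}=0$, and not $\partial_t w_t|_{\partial\Omega}=0$, is needed is correct and matches the paper's implicit usage.
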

\begin{proof}
Compute
\[
\begin{split}
\frac{\partial}{\partial t}\bigg|_{t=0} \mathcal{F}(u_t) = {} & \int_{\Omega} w U^{ij} \eta_{ij} - \int_{\Omega} \eta f - \int_{\partial \Omega} K \psi u_{\nu}^{n-1} \eta_{\nu} \\
= {} & - \int_{\Omega} w_i U^{ij} \eta_j + \int_{\partial \Omega} \psi U^{ij} \eta_i \nu_j - \int_{\Omega} \eta f - \int_{\partial \Omega} K \psi u_{\nu}^{n-1} \eta_{\nu}  \\
= {} & \int_{\Omega} (U^{ij} w_{ij} - f) \eta +\int_{\partial \Omega} \psi U^{ij} \eta_i \nu_j - \int_{\partial \Omega} K \psi u_{\nu}^{n-1}  \eta_{\nu},
\end{split}
\]
where for the last line we have used the fact that $\eta$ vanishes on $\partial \Omega$.  But since $u=0$ on $\partial \Omega$, we can apply the argument of (\ref{bdycalc}) to see that on $\partial \Omega$, we have $U^{ij} \eta_{i} \nu_j = K (u_{\nu})^{n-1} \eta_{\nu}$, as required.
\end{proof}

In \cite{D1}, Donaldson considers  a functional of the form
$$u \mapsto \int_{\Omega} G(d) - \mathcal{L}(u),$$
with $G(d)=\log d$ and $\mathcal{L}(u)$ a lower order functional, which in his case is linear.  
In our case, the functional $\mathcal{L}: \mathcal{S} \rightarrow \mathbb{R}$ is given by
\begin{equation} \label{L}
\mathcal{L}(u)=  \int_{\Omega} uf + \frac{1}{n} \int_{\partial \Omega} K \psi \, u_{\nu}^n.
\end{equation}
We make a definition here that $\mathcal{L}$ is \emph{proper} if, for some $\lambda>0$ and constant $C$,
\begin{equation} \label{properness}
\mathcal{L}(v) \ge \lambda \int_{\partial\Omega} v_{\nu} -C, \quad \textrm{for all } v \in \mathcal{S}.
\end{equation}  This condition is reminiscent of Donaldson's ``stability'' condition in \cite[Condition 1]{D1}.  A key idea in \cite{D1} is that  ``stability'' or ``properness'' of the $\mathcal{L}$-functional should be equivalent to the solvability of the corresponding Euler-Lagrange equation.  Similar ideas were later explored by Le-Savin \cite{LS} with different boundary conditions and  a more general function $G$.  Also related to this is the equivalence of the existence of K\"ahler-Einstein metrics and a Moser-Trudinger type inequality \cite{T, TZ, PSSW}.

A key point we wish to make is that if  $n\ge 2$, the functional $\mathcal{L}$ given by (\ref{L}) is \emph{always} proper.  Indeed, this follows from (\ref{un}) and  the fact that  $K\psi$ is uniformly bounded from below on $\partial \Omega$.  This immediately gives a strong reason to expect that one can always solve the second boundary value problem, as in the statement of our main result, Theorem \ref{maintheorem}.

On the other hand, the properness of $\mathcal{L}$ does not always hold when $n=1$, as can be seen by taking $f$ sufficiently large.  We can prove in this case that properness of $\mathcal{L}$ is indeed equivalent to solvability of the second boundary value problem.

\begin{proposition} \label{prop2}  Suppose that $n=1$, $G(d)= d^{\theta}/\theta$ for some $\theta\in [0,1)$, $\Omega =(a,b) \subset \mathbb{R}$, $f \in C^{\alpha}(\ov{\Omega})$ for some $\alpha \in (0,1)$  and $\psi(a), \psi(b)$ are positive real numbers. Then there exists a strictly convex $u \in C^{4,\alpha}(\ov{\Omega})$ solving the second boundary value problem
\begin{equation} \label{d1}
U^{ij} w_{ij} = f, \quad u|_{\partial \Omega} =0, \quad w|_{\partial \Omega} = \psi
\end{equation}
if and only if $\mathcal{L}$ defined by (\ref{L}) is proper.
\end{proposition}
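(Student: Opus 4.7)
The plan is to reduce both directions of the equivalence to a concrete statement about the unique solution of a linear ODE. In dimension one, the cofactor $U^{ij}$ is simply $1$ (the empty product), so the PDE in (\ref{d1}) is the linear ODE $\bar{w}'' = f$ with $\bar{w}(a) = \psi(a)$, $\bar{w}(b) = \psi(b)$. This has a unique solution $\bar{w} \in C^{2,\alpha}(\ov{\Omega})$ regardless of the sign of $f$, and a strictly convex $u \in C^{4,\alpha}(\ov{\Omega})$ satisfying (\ref{d1}) exists iff $\bar{w} > 0$ on $[a,b]$: one then sets $d := (G')^{-1}(\bar{w}) > 0$ and recovers $u$ by solving $u'' = d$ with $u(a) = u(b) = 0$. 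So the task reduces to proving that $\bar{w} > 0$ on $\ov{\Omega}$ if and only if $\mathcal{L}$ is proper.

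The first step is to rewrite $\mathcal{L}|_{\mathcal{S}}$ as a simple integral involving $\bar{w}$. For any $u \in \mathcal{S}$, I would integrate $\int_a^b u f \,dx = \int_a^b u \,\bar{w}'' \,dx$ by parts twice, using $u(a) = u(b) = 0$ together with $\bar{w}(a) = \psi(a)$, $\bar{w}(b) = \psi(b)$. With $K=1$ and $u_\nu = \pm u'$ at the endpoints, the boundary terms in $\mathcal{L}$ exactly cancel those coming from integration by parts, leaving
\[
\mathcal{L}(u) = \int_a^b d\, \bar{w}\, dx, \qquad \int_{\partial\Omega} u_\nu = u'(b) - u'(a) = \int_a^b d\, dx,
\]
where $d = u''$. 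As $u$ varies over $\mathcal{S}$, $d$ ranges over all positive continuous functions on $[a,b]$ with prescribed endpoint values $d(a) = (G')^{-1}(\psi(a))$ and $d(b) = (G')^{-1}(\psi(b))$ (any such $d$ is the second derivative of some $u \in \mathcal{S}$ obtained by integrating twice and adjusting the affine part to match $u(a) = u(b) = 0$). Properness (\ref{properness}) thus becomes the functional inequality
\[
\int_a^b d\,(\bar{w} - \lambda)\, dx \geq -C \quad \text{for all admissible } d,
\]
for some $\lambda, C > 0$.

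For the easy direction ``solvability $\Rightarrow$ properness'', if $\bar{w} > 0$ on $[a,b]$, compactness gives $\bar{w} \geq \lambda > 0$, and the inequality holds with $C = 0$. For the converse, I would assume $\bar{w}(x_0) \leq 0$ at some $x_0 \in (a,b)$ and construct admissible $d$ violating the inequality for every fixed $\lambda, C$. Fix a reference admissible $d_0$ and test with $d_{M,\ve} := d_0 + M\phi_\ve$, where $\phi_\ve \geq 0$ has unit mass and is supported in $(x_0 - \ve, x_0+\ve) \subset (a,b)$. If $\bar{w}(x_0) < 0$, small $\ve$ gives $\int \phi_\ve \bar{w} \leq -c < 0$, so $M \to \infty$ drives $\int d_{M,\ve}\bar{w} \to -\infty$ while $\int d_{M,\ve} \to +\infty$, violating the inequality. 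In the borderline case $\bar{w}(x_0) = 0$ (which forces $\bar{w} \geq 0$ near $x_0$ and hence $\bar{w}'(x_0) = 0$), a Taylor expansion yields $\bar{w}(x) \leq C'(x-x_0)^2$ nearby, so $\int \phi_\ve \bar{w} = O(\ve^2)$; choosing $\ve$ small enough that this is below $\lambda/2$ and then sending $M \to \infty$ again violates the inequality.

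The main technical subtlety I expect is precisely this borderline case $\bar{w}(x_0) = 0$: the naive bump argument only yields $\int \phi_\ve \bar{w} \geq 0$, and it is the quadratic Taylor estimate at a vanishing interior minimum that rescues the argument. Everything else is a short direct computation, so the proof should be clean once the reformulation $\mathcal{L}|_{\mathcal{S}} = \int d\,\bar{w}$ is in hand.
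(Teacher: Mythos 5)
Your proof is correct, and for the converse direction you take a genuinely different and more elementary route than the paper.

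For the ``solvability $\Rightarrow$ properness'' direction, your argument (integrate by parts twice, set $\lambda = \inf \bar w > 0$) is the same as the paper's. For the converse, the paper assumes properness and derives a bound on $\int_{\partial\Omega} u_\nu$ (hence on $\sup|u|$) for a hypothetical solution $u$ by comparing against an auxiliary solution $\tilde u$ with $\tilde w|_{\partial\Omega}=\lambda/2$ via the concavity/integration-by-parts argument of Lemma~\ref{lemmau}; existence is then obtained, implicitly, by a continuity/degree argument in the same spirit as the proof of Theorem~\ref{maintheorem}, with the maximum principle giving the lower bound for $w$. You instead observe that $\bar w$ is completely determined by $f$ and $\psi$ once $n=1$, so that solvability is literally equivalent to $\bar w>0$ on $[a,b]$, and you prove non-positivity of $\bar w$ implies non-properness by testing $\mathcal{L}$ on the explicit family $d_{M,\ve}=d_0+M\phi_\ve$. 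This contrapositive, test-function argument is self-contained, bypasses the need for the Trudinger--Wang existence result for $\tilde u$, and makes the equivalence transparent. The reformulation $\mathcal{L}(u)=\int_a^b d\,\bar w\,dx$, $\int_{\partial\Omega}u_\nu=\int_a^b d\,dx$, and the description of the admissible class of $d$'s are all correct.

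One small simplification: the case split between $\bar w(x_0)<0$ and $\bar w(x_0)=0$, and the quadratic Taylor estimate in the borderline case, are unnecessary. Properness fails iff for every $\lambda>0$ and $C$ there is an admissible $d$ with $\int d(\bar w-\lambda)<-C$. Fix any $\lambda>0$; since $\bar w(x_0)\le 0<\lambda$, continuity gives $\bar w-\lambda<-\lambda/2$ on a neighborhood of $x_0$, so choosing $\phi_\ve$ supported there yields $\int \phi_\ve(\bar w-\lambda)<-\lambda/2$, and sending $M\to\infty$ does the job uniformly in both cases. The point is that you only need $\int \phi_\ve \bar w < \lambda$ (which follows from $\int\phi_\ve\bar w\to\bar w(x_0)\le 0$), not $\int\phi_\ve\bar w<0$.
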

\begin{proof}
In dimension $1$, the equation (\ref{d1}) simplifies considerably, since $d=\det D^2 u =u''$ and $U^{ij} w_{ij}=f$ becomes $w'' =f$.

First assume that $u$ solves (\ref{d1}).  Then writing $w=w(u'')$, we have for every $v\in \mathcal{S}$,
$$\int_{\Omega} fv =  \int_{\Omega} w'' v 
 = - \int_{\Omega} w'  v' 
 =  \int_{\Omega}  w v'' - \int_{\partial \Omega} \psi v_{\nu}. $$
Hence for $\lambda = \inf_{\Omega} w>0$, we obtain
$$\int_{\Omega} fv+  \int_{\partial \Omega} \psi v_{\nu} \ge \lambda \int_{\Omega} v'' = \lambda \int_{\partial \Omega} v_{\nu},$$
as required.

Conversely, suppose that $\mathcal{L}$ is proper on $\mathcal{S}$.  Then (\ref{properness}) holds for some $\lambda>0$. To show the existence of solutions to (\ref{d1}) it suffices to obtain an \emph{a priori} estimate on $\sup_{\Omega} |u|$.  Indeed, we can find a solution to $w''=f$ by integrating, and the only thing we need to check is that $w$ has an \emph{a priori} lower bound away from zero.  But this is  obtained by applying the maximum principle to the quantity $-\log w +Mu$ for sufficiently large $M$ (here we use the fact that $G=d^{\theta}/\theta$ for $\theta \in [0,1)$).

Let $\tilde{u}$ be a strictly convex function solving 
\begin{equation} \label{letutilde}
\tilde{w}''=0, \quad \tilde{u}|_{\partial \Omega} = 0, \quad \tilde{w}|_{\partial \Omega} = \frac{\lambda}{2}, 
\end{equation}
where we are writing $\tilde{w}$ for $w(\tilde{u}'')$.  By the result of \cite{TW05}, we know that such a $\tilde{u}$ exists, but in this case, one could even write down the solution explicitly.

By the same arguments as in (\ref{A10}),  (\ref{A01}) and (\ref{key1}) but noting that here $\tilde{w}$ has different boundary values from $w$, and $U^{ij}=\tilde{U}^{ij}=1$, we have
\begin{equation} \label{kfc}
\int_{\Omega} fu + \int_{\partial \Omega} \psi u_{\nu} \le \frac{\lambda}{2} \int_{\partial \Omega} u_{\nu} + C,
\end{equation}
for $C$ depending on $\psi$, $\lambda$, $\Omega$ and bounds for $f$.

Hence by the properness assumption,
$$\frac{\lambda}{2} \int_{\partial \Omega}  u_{\nu} \le C',$$
which implies an upper bound for  $\sup_{\Omega} |u|$, as required.
\end{proof}

Although we have stated Proposition \ref{prop2} for zero boundary data, the same proof works for general boundary data $\varphi$.

Of course the 
 equation (\ref{d1})  for $n=1$ is a very simple ODE which is  itself not of particular interest to us.    The inclusion of Proposition \ref{prop2} is to illustrate a general relationship between solutions of fourth order equations and behavior of the corresponding functionals.

\end{document}